\newtheorem{theorem}{Theorem}[section]
\newtheorem{proposition}[theorem]{Proposition}
\theoremstyle{remark}
\newtheorem{question}[theorem]{Question}
\newtheorem{remark}[theorem]{Remark}
\begin{document}
\author[I. Juh\'asz]{Istv\'an Juh\'asz}
\address      {HUN-REN Alfr\'ed Rényi Institute of Mathematics%
}
\email{juhasz@renyi.hu}

\author[J. van Mill]{Jan van Mill}
\thanks
{The research in this note derives from the authors’
collaboration at the Renyi Institute in Budapest in the fall of 2025. The second-listed author is pleased to thank the Rényi Institute for
generous hospitality}
\address{University of Amsterdam}
\email{j.vanMill@uva.nl}

\author[L. Soukup]{Lajos Soukup}

\address
     {HUN-REN Alfr{\'e}d R{\'e}nyi Institute of Mathematics}
\email{soukup@renyi.hu}

\makeatletter
  \@namedef{subjclassname@2020}{%
  \textup{2020} Mathematics Subject Classification}
   \makeatother

\subjclass[2020]{54E45, 54H11}
\keywords{locally compact space, compact space, separable space, homogeneous space,
condensation, topological group, compact topological group}

\begin{abstract}
We show that there are locally compact spaces that can be condensed onto separable spaces, but not onto compact separable spaces.
We also show that for every cardinal $\kappa$ there is a locally compact topological group of cardinality $2^\kappa$ that can be condensed onto
a compact space but not onto a compact topological group. These answer some questions of Arhangel'skii and Buzyakova.
\end{abstract}

\title{Condensations with extra properties}

\maketitle

\section{Introduction}
\emph{All topological spaces that we discuss are infinite Tychonoff spaces.}

A \emph{condensation} $f$ of a space $X$ onto a space $Y$ is a continuous bijection $f: X\to Y$. Clearly, for a topological space $X$ admitting a continuous bijection onto a space $Y$
with a certain property is equivalent to $X$ having a coarser topology with that property. Natural examples of important coarser topologies, and hence of condensations, are for example the various weak topologies in functional analysis. Compact spaces do not have interesting condensations, but every locally compact noncompact space \emph{does}; it has a compact condensation by a classical result of Parhomenko~\cite{Parhomenko41}. There is quite an extensive literature on condensations, see for example \cite{OsipovPytkeev23, LipinOsipov22,BeluginOsipovPytkeev21} for references as well as some old and some new results.

In the recent preprint~\cite{ArhangBuz25}, Arhangel'skii and Buzyakova obtained several results on condensations for ordered spaces and their subspaces.
The main aim of this note is to answer their Question 2.7 and the compact case of Question 2.8 in the negative.

\section{Preliminaries}
For all undefined notions, we refer to Engelking~\cite{engelking:gentop} and Juh\'asz~\cite{juhasz}.

We denote the real line, the closed unit interval and the integers by $\mathbb{R}$, $\mathbb{I}$ and $\mathbb{Z}$, respectively. For a space $X$, we let $\tau X$ denote its topology. Moreover, by $X=\bigoplus_{i\in I} X_i$ we mean that $X$ is the topological sum of the spaces $\{X_i : i\in I\}$. Hence we implicitly assume that the $X_i$'s are pairwise disjoint, and
$
    \tau X = \{U\subseteq X : (\forall\, i\in I)(U\cap X_i\in\tau X_i)\}.
$
A space $X$ is \emph{homogeneous} if for all $x,y\in X$ there exists a homeomorphism $f:X\to Y$ such that $f(x)=y$. Examples of homogeneous spaces are topological groups. But there are many homogeneous spaces that do not have the structure of a topological group,  for example
the Hilbert cube since it has the fixed-point property, Keller~\cite{keller}.

As usual, we denote the cardinality of the continuum by $\mathfrak{c}$. For any cardinal number $\kappa$ we denote by $log(\kappa)$
the smallest cardinal $\mu$ such that $2^\mu \ge \kappa$.

\section{Positive results}

We present here our main results for detecting if certain topological sums
have or do not have a separable respectively a compact separable condensation.
The results in this section will be used in the next section to present our counterexamples to some questions of Arhangel'skii and Buzyakova.

\begin{theorem}\label{eerstestelling}
Assume that $X=\bigoplus \{X_\alpha : \alpha < \kappa \}$, where $$\omega \le \kappa \le 2^\mathfrak{c} \text{ and }\mu = \sup \{w(X_\alpha) : \alpha < \kappa\} \le \mathfrak{c}.$$ Then $X$ condenses onto a separable space $Y$ of weight at most $\lambda = \max\{\log(\kappa),\mu\}$.
\end{theorem}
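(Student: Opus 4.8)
The plan is to realize the desired condensation as a continuous injection of $X$ into the product $P=\{0,1\}^{\log\kappa}\times\mathbb{I}^\mu$ and then to arrange that its image is separable. First note that $\log\kappa\le\mathfrak{c}$ (since $\kappa\le 2^{\mathfrak{c}}$, so $\mathfrak{c}$ is already a witness for $2^{\mathfrak{c}}\ge\kappa$), and that both $\log\kappa$ and $\mu$ are infinite (the $X_\alpha$ are infinite, so $\mu\ge\omega$; and $2^{\log\kappa}\ge\kappa\ge\omega$ forces $\log\kappa\ge\omega$). Consequently $w(P)=\max\{\log\kappa,\mu\}=\lambda$, the cube $K:=\{0,1\}^{\log\kappa}$ has cardinality $2^{\log\kappa}\ge\kappa$, and $P$ is separable by the Hewitt--Marczewski--Pondiczery theorem, being a product of at most $\mathfrak{c}$ separable factors. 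I would fix for each $\alpha$ a continuous injection $e_\alpha\colon X_\alpha\to\mathbb{I}^\mu$, which exists because $w(X_\alpha)\le\mu$, and record that $\mathbb{I}^\mu$ is homogeneous, since $\mathbb{I}^\mu\cong(\mathbb{I}^\omega)^\mu$ and the Hilbert cube is homogeneous by Keller~\cite{keller}.

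The map is built by attaching to each summand a distinct ``address'' in $K$. Choosing an injection $\alpha\mapsto d_\alpha\in K$ (possible since $|K|\ge\kappa$) and setting $f\restriction X_\alpha=(d_\alpha,e_\alpha)$ defines $f\colon X\to P$. This $f$ is continuous, because its restriction to each clopen summand is, and it is injective, because the first coordinate separates points lying in different summands while $e_\alpha$ separates points within a summand. Hence $f$ is a condensation of $X$ onto $Y:=f(X)\subseteq P$, and $w(Y)\le w(P)=\lambda$. It remains only to make $Y$ separable.

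For separability I would force $Y$ to contain a fixed countable set $D^\ast$ that is dense in all of $P$; since $Y\subseteq P$, such a $D^\ast$ is then dense in $Y$ as well. The key is to choose $D^\ast$ with pairwise distinct first coordinates, say $D^\ast=\{(a_i,s_i):i<\omega\}$ with the $a_i\in K$ distinct. Granting this, I would reserve $\omega$ of the summands $X_{\beta_i}$, put $d_{\beta_i}=a_i$, pick any $x_{\beta_i}\in X_{\beta_i}$, and use the homogeneity of $\mathbb{I}^\mu$ to replace $e_{\beta_i}$ by $h\circ e_{\beta_i}$ for a homeomorphism $h$ carrying $e_{\beta_i}(x_{\beta_i})$ to $s_i$; then $f(x_{\beta_i})=(a_i,s_i)$, so $D^\ast\subseteq Y$. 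The remaining summands are assigned distinct addresses in $K\setminus\{a_i:i<\omega\}$, still possible as $|K|\ge\kappa$. Because the $a_i$ are distinct, this step does not disturb the injectivity of $f$.

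The crux is therefore the purely combinatorial claim that $P$ admits a countable dense set with pairwise distinct first coordinates; equivalently, that $K=\{0,1\}^{\log\kappa}$ carries countably many pairwise disjoint countable dense subsets $\{A_k:k<\omega\}$. Indeed, fixing a countable dense $\{t_k:k<\omega\}\subseteq\mathbb{I}^\mu$ and setting $D^\ast=\bigcup_k(A_k\times\{t_k\})$ yields a countable set whose first coordinates are distinct and which meets every basic open box (choose $t_k$ in the second factor, then use density of $A_k$ in the first), hence is dense. I expect this resolvability statement to be the only delicate point. Since $\log\kappa\ge\omega$, $K$ splits off a Cantor factor, $K\cong\{0,1\}^\omega\times K$, and one obtains the $A_k$ as products $C_k\times S$, where $\{C_k:k<\omega\}$ is a partition of a countable dense copy of $\mathbb{Q}$ inside $\{0,1\}^\omega$ into $\omega$ pieces each dense in $\{0,1\}^\omega$ (the $\omega$-resolvability of $\mathbb{Q}$), and $S$ is a single countable dense subset of the remaining factor.
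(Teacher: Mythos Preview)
Your argument is correct. The approach differs from the paper's in a pleasant way: you work in the product $\{0,1\}^{\log\kappa}\times\mathbb{I}^\mu$, using the Cantor-cube factor purely as an addressing device to keep the images of the summands disjoint, and you obtain separability by forcing a prescribed countable dense set (with distinct first coordinates, via the $\omega$-resolvability of the Cantor cube) into the image. The paper instead embeds directly into $\mathbb{I}^\lambda$ and splits into two cases: when $\lambda=\omega$ it simply partitions the Hilbert cube into $\mathfrak{c}$ copies of itself and observes that any subspace is second countable, hence separable; when $\lambda>\omega$ it picks a countable dense set $D\subseteq\mathbb{I}^\lambda$, projects onto a countable set of coordinates to separate the points of $D$, and uses the resulting fibers (each a copy of $\mathbb{I}^\lambda$) both to host the first $\omega$ summands through the points of $D$ and, via one extra fiber split into $2^\lambda$ sub-cubes, to absorb the remaining summands. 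Your route has the advantage of being case-free and making the addressing mechanism completely explicit; the paper's route avoids the resolvability detour by letting the fibers of a countable projection do the separating. Both rely on Keller's homogeneity of the Hilbert cube in the same way, to steer one point of each of the first $\omega$ summands onto the chosen dense set.
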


\begin{proof}
We first consider the case $\lambda = \omega$, i.e. when $\kappa \le \mathfrak{c}$ and each $X_\alpha$ is second countable.
In this case $Y$ is to be even second countable, i.e. a subspace of the Hilbert cube $\mathbb{I}^\omega$. Now, it is obvious that
$\mathbb{I}^\omega$ can be partitioned into $\mathfrak{c}$ homeomorphic copies of $\mathbb{I}^\omega$, say
$\mathbb{I}^\omega = \bigcup \{H_\alpha : \alpha < \mathfrak{c}\}$. Then for every $\alpha < \kappa \le \mathfrak{c}$ there is an
embedding $f_\alpha$ of $X_\alpha$ into $H_\alpha$, hence the map $f = \bigcup \{f_\alpha : \alpha < \kappa\}$ is clearly
a condensation onto the second countable space $Y = \bigcup \{f_\alpha [X_\alpha] : \alpha < \kappa \}$.

Now, assume that $\lambda > \omega$.
Since $\lambda \le \mathfrak{c}$, the Tychonoff cube $\mathbb{I}^\lambda$ is separable, so
let $D = \{d_n : n < \omega\}$ be a faithfully indexed countable dense subset of $\mathbb{I}^\lambda$. Let $E \subset \lambda$ be countably infinite such that $\pi_E{\restriction}D$ is 1-1, where $\pi_E: \mathbb{I}^\mu\to \mathbb{I}^E$ is the projection.
Let $p\in  \mathbb{I}^E\setminus \pi_E(D)$. For each $n < \omega$, let $S_n = \pi_E^{-1}(\{\pi_E(d_n)\})$. Similarly, $T_p=\pi_E^{-1}(\{p\})$.
Then $\lambda > \omega$ implies that each member of $\{S_n : n < \omega\} \cup\{T_p\}$ is homeomorphic to $\mathbb{I}^\lambda$. Since $\mathbb{I}^\lambda\times\mathbb{I}^\lambda \approx\mathbb{I}^\lambda$, we can split $T_p$ into a pairwise disjoint family $\{K_\alpha : \alpha < 2^\lambda\}$ consisting of closed sets each homeomorphic to $\mathbb{I}^\lambda$. Note also that $\lambda \ge \log(\kappa)$ implies $\kappa \le 2^\lambda$.

Note that for every $\alpha < \kappa$ we have $w(X_\alpha) \le \mu \le \lambda$, hence $X_\alpha$ embeds into $\mathbb{I}^\lambda$.
For each $n < \omega$, let $f_n: X_n\to S_n$ be an embedding. Since $\mathbb{I}^\omega$ is homogeneous by Keller~\cite{keller}, so is $S_n$, hence we may assume that $f_n$ has the property that $d_n\in f_n(X_n)$.  For each $\alpha$ with $\omega \le \alpha < \kappa$ let $f_\alpha: X_\alpha\to K_\alpha$ be any embedding. Now the map
$f = \bigcup \{f_\alpha : \alpha < \kappa\}$ is clearly
a condensation onto the space $Y = \bigcup \{f_\alpha [X_\alpha] : \alpha < \kappa \} \subset \mathbb{I}^\lambda$
that is separable because $D \subset Y$.
\end{proof}

Let $X$ and $Y$ be spaces. We say that $X$ and $Y$ are \emph{somewhere homeomorphic} if there are nonempty $U\in\tau X$ and $V\in\tau Y$ such that $U$ and $V$ are homeomorphic.

\begin{theorem}\label{tweedestelling}
Let $X=\bigoplus_{n<\omega} X_n$
where each $X_n$ is $\sigma$-compact and assume that $X$ condenses onto a Baire space $Y$. Then:
\begin{enumerate}
\item there exists $n < \omega$ such that $X_n$ and $Y$ are somewhere homeomorphic;
\item if each $X_n$ is compact then there are infinitely many $n$ such that $X_n$ and $Y$ are somewhere homeomorphic;
\item if $Y$ is compact and each $X_n$ is a continuum then there are infinitely many $n$ such that $X_n$ is homeomorphic to a component of $Y$ that is clopen in $Y$.
\end{enumerate}
\end{theorem}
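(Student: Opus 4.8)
The plan throughout is to analyse the restrictions $f_n = f{\restriction}X_n\colon X_n\to Y_n$, where $Y_n:=f[X_n]$: these images partition $Y$, each $f_n$ is a continuous bijection, and I would lean on two facts — that $f{\restriction}C$ is a homeomorphism onto $f[C]$ whenever $C\subseteq X_n$ is compact (being a continuous bijection from a compact space onto a Hausdorff one), and that $f^{-1}[V]\subseteq X_n$ for every $V\subseteq Y_n$, since $f$ is injective. For (1) I would write $X_n=\bigcup_{k<\omega}X_{n,k}$ with each $X_{n,k}$ compact, so that $\{f[X_{n,k}]:n,k<\omega\}$ is a countable closed cover of $Y$. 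As $Y$ is Baire, some $f[X_{n,k}]$ has nonempty interior; fixing a nonempty open $V\subseteq f[X_{n,k}]$, the set $U:=f^{-1}[V]$ is open in $X$ and contained in $X_{n,k}\subseteq X_n$ (hence open in $X_n$), and $f{\restriction}U\colon U\to V$ is a homeomorphism by the two facts above, so $X_n$ and $Y$ are somewhere homeomorphic.

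For (2), where each $Y_n$ is now compact (hence closed) and $f_n$ a homeomorphism onto $Y_n$, I would instead show that $Y_n$ has nonempty interior in $Y$ for infinitely many $n$; each such $n$ works, since $\operatorname{int}_Y Y_n$ is open both in $Y$ and in $Y_n$ and pulls back under $f_n$ to a nonempty open subset of $X_n$ homeomorphic to it. To see this I would suppose only finitely many $n$, say those in a finite set $F$, enjoy nonempty interior, and pass to $Y':=Y\setminus\bigcup_{n\in F}Y_n$. This is a nonempty open, hence Baire, subspace of $Y$, yet $Y'=\bigcup_{n\notin F}Y_n$ realises it as a countable union of subsets that are closed and nowhere dense in $Y'$ (nonempty interior in the open set $Y'$ would yield nonempty interior in $Y$), contradicting the Baire property.

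For (3) each $Y_n$ is a subcontinuum with $f_n$ a homeomorphism, and I would begin by identifying the components of $Y$. For a component $Q$ of the compact Hausdorff space $Y$, the family $\{Q\cap Y_n:n<\omega\}$ partitions the continuum $Q$ into countably many pairwise disjoint closed sets, so by Sierpiński's theorem (no continuum is a union of more than one, but countably many, pairwise disjoint nonempty closed sets) exactly one of them is nonempty; hence $Q\subseteq Y_n$ for a single $n$, and maximality of $Q$ together with connectedness of $Y_n$ forces $Q=Y_n$. Thus the components of $Y$ are precisely the infinitely many sets $Y_n$. I would then pass to the quotient $Z$ of $Y$ obtained by collapsing each component, a countably infinite compact Hausdorff (in fact zero-dimensional) space in which a point is isolated exactly when the corresponding $Y_n$ is clopen in $Y$, and prove that $Z$ has infinitely many isolated points. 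If it had only finitely many, deleting the corresponding clopen components from $Y$ would leave a nonempty clopen subspace whose own component quotient is a nonempty countable compact Hausdorff space with no isolated points; since a nonempty compact Hausdorff space without isolated points is uncountable, this is absurd. Each of the infinitely many clopen components $Y_n$ is then homeomorphic to $X_n$, which is the assertion.

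I expect the Baire-category steps in (1) and (2) to be routine, and the heart of the matter to be (3). There the real work is carried by two classical inputs — Sierpiński's theorem, which pins the components of $Y$ down to the single sets $Y_n$, and the fact that a crowded compact Hausdorff space is uncountable, which forces infinitely many of those components to be clopen — linked by the observation that ``$Y_n$ clopen'' means exactly ``isolated point of the component quotient''. The point most in need of care is verifying that this component quotient is Hausdorff, so that the uncountability bound is legitimately available.
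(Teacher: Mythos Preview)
Your proof is correct and follows essentially the same route as the paper's. Part (1) is identical. For (3) you invoke Sierpi\'nski's theorem and then argue directly that a countably infinite compact Hausdorff space has infinitely many isolated points; the paper does the same, citing Kuratowski for both the identification of the components and the Hausdorffness of the quotient, and simply asserting the isolated-points fact---so you have just filled in details the paper leaves to references.

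The only tactical difference is in (2): the paper, for each fixed $n$, picks a point of $f[X_{n+1}]$, takes a neighbourhood whose closure misses $\bigcup_{m\le n} f[X_m]$, and applies Baire there to produce some $m>n$ with $f[X_m]$ containing a nonempty open set; you instead argue by contradiction, removing the finitely many $Y_n$ with nonempty interior and applying Baire to the open remainder. Both are standard Baire manoeuvres yielding the same conclusion.
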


\begin{proof}
Let $f: X\to Y$ be a condensation.

For (1), write $X_n$ as $\bigcup_{m<\omega} X_n^m$, where each $X_n^m$ is compact. Since $Y=\bigcup_{n<\omega}\bigcup_{m<\omega} f(X_n^m)$, by the Baire Category, there are $n,m<\omega$ such that $f(X_n^m)$ contains a nonempty open subset $U$ of $Y$. Since the restriction of $f$ to $X_n^m$ is a homeomorphism, we are done.

For (2), for a fixed $n < \omega$, take $y\in f(X_{n+1})$, and let $U$ be an open neighborhood of $y$ such that $\overline{U}\cap\bigcup_{m\le n}f_m(X_m)=\emptyset$. Since $\overline{U}\subseteq\bigcup_{m>n}f_m(X_m)$, by the Baire Category Theorem, there exist $m > n$ and $V$ such that $V\subseteq \overline{U}\cap f_m(X_m)$ is nonempty and open in $\overline{U}$. Then $V\cap U$ is nonempty and open in $U$, hence open in $Y$.

For (3), first observe that by Kuratowski~\cite[Theorem 6a on page 174]{kurat:top2}, $\mathcal{C}=\{f(X_n): n < \omega\}$ is the collection of all components of $Y$. Hence $\mathcal{C}$ is an upper semicontinuous decomposition of $Y$ and if we collapse each $C\in \mathcal{C}$ to a single point, the resulting space is a countably infinite compact Hausdorff space. See Kuratowski~\cite[Theorem 4 on page 151]{kurat:top2} for details.
Being countable and compact, the decomposition space has infinitely many isolated points. But this means that infinitely many $C\in \mathcal{C}$ are clopen in $Y$.
\end{proof}

It is a natural question whether Theorem~\ref{tweedestelling} remains true for classes larger than $\sigma$-compact spaces. A natural and interesting case is that of Lindel\"of spaces. We do not know the answer to the following question.

\begin{question}
For each $n < \omega$ let $X_n$ be a nonempty Lindel\"of space of weight at most~$\mathfrak{c}$ and let $X=\bigoplus_{n<\omega} X_n$. Assume that $X$ condenses onto a separable compact space $Y$. Is there an $n< \omega$ such that $X_n$ is somewhere separable?
\end{question}

But we do know that some condition on the $X_n$'s is necessary, as we will now demonstrate.

\begin{remark}
By Dow, Gubbi and Szym\'anski~\cite[Theorem 1]{DowGubbiSzymanski}, there exists a crowded countable extremally disconnected space $X$ such that $X^*=\beta X\setminus X$ is $\omega$-bounded, that is, for every countable $A\in [X^*]^\omega$, the closure of $A$ in $\beta X$ is contained in $X^*$. Observe that $X^*$ is crowded. For if $p$ is an isolated point of $X^*$, then there is a clopen subset $C$ of $\beta X$ such that $C\cap X^*=\{p\}$. But then $\beta X$ would contain a compact countably infinite subset, hence a nontrivial convergent sequence, which contradicts $\beta X$ being extremally disconnected. By Comfort and Garci\'a-Ferreira~\cite[Theorem 6.9]{ComfortGarcia96}, $X^*$ is $\omega$-resolvable, that is, there is a partition $\{S_n : n < \omega\}$ of $X^*$ such that each $S_n$ is dense in $X^*$. Since $X^*$ is $\omega$-bounded, each $S_n$ is nowhere separable. List $X$ faithfully as $\{x_n : n < \omega\}$, and, for each $n < \omega$, put $T_n = S_n\cup\{x_n\}$. Then $\bigoplus_{n < \omega} T_n$ condenses onto the compact separable space $\beta X$, yet every $T_n$ is nowhere separable, hence in Theorem~\ref{tweedestelling} the assumption on $\sigma$-compactness cannot be dropped completely.
\end{remark}

\section{Negative results}
We first deal with Question 2.7 in Arhangel'skii and Buzyakova~\cite{ArhangBuz25}: \emph{If $X$ is locally compact space that can be condensed onto a separable space, can $X$ be condensed onto a compact separable space?}

The counterexamples to this question that we shall present will actually be topological sums
of compacta.
We actually consider topological sums of compact spaces of increasing topological complexity.

\subsection{Consistent discrete counterexamples}
To begin with, we consider discrete spaces, i.e. topological sums of points.
It does not come as a surprise that in this case we have to deal with purely set theoretic considerations.
We shall denote the discrete space of cardinality $\kappa$ by $D(\kappa)$.

Clearly, $D(\kappa)$ condenses onto a separable space iff $\omega \le \kappa \le 2^{\mathfrak{c}}$.
Moreover, $D(\kappa)$ condenses onto a separable compact space iff there is a separable compact space
of cardinality $\kappa$. In other words, our task is to determine the possible cardinalities
of separable compact spaces. This turns out to be a highly non-trivial problem that is very sensitive
to what model of set theory we are in.

Let us denote by $\bf S$ the set of possible cardinalities
of separable compact spaces. The following two conditions on $\bf S$ are then easily established in ZFC.

\smallskip

(*) $[\omega, \mathfrak{c}] \cup \{2^\kappa : \omega < \kappa \le 2^\mathfrak{c}\} \subset \bf S$;

\smallskip

(**) if $A \subset \bf S$ is countable then $\,\sup A \in \bf S$, i.e. $\bf S$ is $\omega$-closed.

\smallskip

Note that under CH, i.e. when $\mathfrak{c} = \omega_1$,
(*) reduces to $\{\omega, \omega_1, 2^{\omega_1}\} \subset \bf S$,
hence the problem is to determine $(\omega_1, 2^{\omega_1}) \cap \bf S$.

It turns out that much more can be said about $\bf S$ under CH.
This is because then $${\bf S} = \{|X| : X \text{ is compact with } w(X) \le \omega_1\}.$$
Indeed, this is immediate from Parovichenko's theorem in \cite{Parov} saying that every compact space $X$
of weight $\omega_1$ is a remainder of $D(\omega)$, hence $|X| = |X| + \omega \in \bf S$.
Also, under CH every separable space has weight $\le \omega_1$.

Since the topological sum of $\omega_1$ many spaces of weight $\le \omega_1$ is $\omega_1$,
it follows that under CH (**) can be strengthened to

\smallskip

(***) if $A \subset \bf S$ with $|A| \le \omega_1$ then $\,\sup A \in \bf S$, i.e. $\bf S$ is
both $\omega$- and $\omega_1$-closed.

\smallskip

On the other hand, compact spaces
of weight $\le \omega_1$ are just the closed subspaces of $\mathbb{I}^{\omega_1}$, and so
it is easy to see that $${\bf S} = \{|br(T)| : T \text{ is a tree with } ht(T) = |T| = \omega_1\},$$
where $br(T)$ is the set of all cofinal branches of $T$. For a closed subspace $X$ of $\mathbb{I}^{\omega_1}$
and $\alpha < \omega_1$ we have $T_\alpha = \{x \upharpoonright \alpha : x \in X\}$ as the $\alpha$th level
of the tree $T$ corresponding to $X$. $|T_\alpha| \le \omega_1$ follows from CH.

Luckily for us, $\bf S$ in this guise had been thoroughly investigated by set theorists under CH.
In fact, a complete description of $\bf S$ under CH was given by Po\' or and Shelah in \cite{PSh}.
It turns out that (*) and (***) are the only conditions that $\bf S$ has to satisfy, except that
whether $\omega_2 \in \bf S$ when $\omega_2 < 2^{\omega_1}$ requires special attention.

Indeed, as was observed by Kunen, if the "real" $\omega_2$ is a successor in the constructible universe $L$, i.e.
it is not inaccessible in $L$,  then $\omega_2 \in \bf S$, see e.g. Theorem 4.1 of \cite{PSh}.
So, in this case for any set of cardinals $S$ with $\omega_2 \in S$ and satisfying (*) and (***)
there is a forcing extension in which CH holds and $S = \bf S$.  If, on the other hand, there is an inaccessible cardinal
then in this statement the condition $\omega_2 \in \bf S$ can be dropped. This is
what the main result Theorem 3.1 in \cite{PSh} says.

Actually, for our purposes a more accessible result is Theorem 4.8(i) in \cite{Juh} due to Kunen.
This states that if there is an inaccessible cardinal then there is a model of CH in which
${\bf S} = \{\omega, \omega_1, \lambda = 2^{\omega_1}\}$, where $\lambda$ is any regular cardinal above $\omega_1$.

So, from both of these results we can deduce the consistency of many cardinals $\kappa$ with
$\mathfrak{c} = \omega_1 < \kappa < 2^{\omega_1}$ such that $D(\kappa)$ does not admit any
condensation on a separable compact space, while it does admit a
condensation on a separable compact space.

Let us now consider the case when CH fails. While in this case much less is known about the
possible behavior of $\bf S$, we can produce the consistency of having many cardinals $\kappa$ with
$\mathfrak{c} < \kappa < 2^{\mathfrak{c}}$ such that $D(\kappa)$ does not admit any
condensation on a separable compact space.

Indeed, Theorems 3.9 and 4.11 of \cite{JuhaszSoukupSzent98} imply that after adding many Cohen reals,
and thus "killing" CH, we obtain generic extensions in which $\bf S = [\omega,\mathfrak{c}] \cup \{2^\mathfrak{c}\}$,
while $2^\mathfrak{c}$ is arbitrarily higher than $\mathfrak{c}$. So again, in these forcing extensions
$D(\kappa)$  does not admit a condensation on a separable compact space whenever $\mathfrak{c} < \kappa < 2^\mathfrak{c}$.

\subsection{Nondiscrete spaces}
A discrete space is a topological sum of very uninteresting topological spaces, namely singleton sets. If in topological sums we consider more complex spaces, it is conceivable that topological arguments will become important. This is what we will see in this subsection.

Since $\log(\omega)=\omega$, by Theorems~\ref{eerstestelling} and \ref{tweedestelling}, all we need for a negative answer to \cite[Question 2.7]{ArhangBuz25} is a compact space of weight at most $\mathfrak{c}$ which is nowhere separable. Hence the \v{C}ech-Stone remainder of the discrete space $\omega$ does the job, for example. Or the countable infinite product of copies of the 1-point compactification of an uncountable discrete space of size at most $\mathfrak{c}$.
There are even first countable, compact linearly orderable spaces that are nowhere separable, even continua. Among ccc spaces, such examples are also easy to find. For example, the Stone space of the reduced measure algebra of $\mathbb{I}$, or its cone if one wants a connected example.


\subsection{Topological groups} We now turn to the compact case of Question~2.8 in Arhangel'skii and Buzyakova~\cite{ArhangBuz25}:  \emph{If $G$ is locally compact topological group that can be condensed onto a compact space, can $G$ be condensed onto a compact topological group?}

The answer to this question is in the negative. Let $G$ be a discrete space of cardinality $\aleph_\omega$. Then $G$ is a topological group and condenses onto a compact space (for example, the 1-point compactification of a discrete space of cardinality $\aleph_\omega$). But it does not condense onto a compact topological group, since the cardinality of such a group is of the form $2^\tau$ for certain $\tau$ (Comfort~\cite[Theorem 3.1]{comfort:handbook}). So perhaps what was meant in Question~2.8 is whether every topological group of the right cardinality that condenses onto a compact space, actually condenses onto a compact topological group.

\begin{proposition}\label{vierdestelling}
The topological group $\mathbb{R}$ condenses onto a compact space, but not onto a compact homogeneous space.
\end{proposition}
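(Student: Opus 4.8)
The first assertion is immediate: since $\mathbb{R}$ is locally compact and noncompact, it has a compact condensation by the theorem of Parhomenko~\cite{Parhomenko41}. So the content lies entirely in the second assertion, and the plan is to argue by contradiction. Suppose $f\colon\mathbb{R}\to Y$ is a condensation onto a compact homogeneous space $Y$. As a continuous image of the connected space $\mathbb{R}$, the space $Y$ is connected, and being compact Tychonoff it is compact Hausdorff, hence Baire. Writing $\mathbb{R}=\bigcup_{n<\omega}[-n,n]$ gives $Y=\bigcup_{n<\omega}f([-n,n])$ as an increasing union of compacta; by the Baire category theorem some $K:=f([-N,N])$ has nonempty interior in $Y$. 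Since $f{\restriction}[-N,N]$ is a continuous bijection from a compact space onto the Hausdorff space $K$, it is a homeomorphism, so $K\approx\mathbb{I}$.

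The key local step is to produce a single point of $Y$ with a neighborhood homeomorphic to $\mathbb{R}$. Let $U=\mathrm{int}_Y(K)\neq\emptyset$; then $U$ is open in $Y$ and, being contained in $K$, it is also open in $K\approx\mathbb{I}$, so $U$ is homeomorphic to a nonempty open subset of $\mathbb{I}$. Such a subset meets the interior $(0,1)$ of $\mathbb{I}$, and around a point lying there one can choose an open interval $J$ with $J\approx\mathbb{R}$ whose corresponding subset $W\subseteq U$ is open in $K$. Since $W\subseteq U$ and $U$ is open in $Y$, the set $W$ is in fact open in $Y$ (if $W=K\cap G$ with $G$ open in $Y$, then $W=G\cap U$). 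Thus some point of $Y$ has an open neighborhood homeomorphic to $\mathbb{R}$.

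Now homogeneity does the rest: transporting such a neighborhood by the self-homeomorphisms of $Y$ shows that \emph{every} point of $Y$ has an open neighborhood homeomorphic to $\mathbb{R}$, so $Y$ is a $1$-manifold. Being compact it is second countable, and by the classical classification of compact connected $1$-manifolds we conclude $Y\approx\mathbb{S}^1$. It remains to refute that $\mathbb{R}$ can condense onto $\mathbb{S}^1$. Fix $p=f(0)$; then $\mathbb{S}^1\setminus\{p\}$ is an open arc, hence connected and homeomorphic to $\mathbb{R}$, whereas $f^{-1}(\mathbb{S}^1\setminus\{p\})=(-\infty,0)\cup(0,\infty)$ splits into two rays. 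Each of $f{\restriction}(0,\infty)$ and $f{\restriction}(-\infty,0)$ is a continuous injection of a copy of $\mathbb{R}$ into the arc, hence strictly monotone with open image; the two images are disjoint, nonempty, and cover the connected arc $\mathbb{S}^1\setminus\{p\}$, which is absurd. This contradiction completes the proof.

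The step I expect to be the main obstacle is the local one: extracting from the mere existence of a condensation a genuine \emph{$Y$-open} neighborhood of some point that is homeomorphic to $\mathbb{R}$ rather than merely to an arc with an endpoint, and in particular upgrading ``open in $K$'' to ``open in $Y$.'' Once homogeneity promotes this to a global manifold structure, pinning $Y$ down to $\mathbb{S}^1$ and refuting a condensation of $\mathbb{R}$ onto $\mathbb{S}^1$ by the connectedness–versus–two–rays argument is comparatively routine.
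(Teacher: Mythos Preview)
Your proof is correct and follows essentially the same route as the paper's: Baire category on the images $f([-n,n])$ to get an open piece of $Y$ homeomorphic to an open interval, then homogeneity plus compactness to recognize $Y$ as a compact connected $1$-manifold, hence $\mathbb{S}^1$. The only cosmetic difference is in the final step refuting a condensation $\mathbb{R}\to\mathbb{S}^1$: the paper shows that $f$ would have to be open by analyzing the possible shapes of $f((u,v))$, whereas you remove $f(0)$ and invoke the connectedness-versus-two-rays argument; both are equally elementary.
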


\begin{proof}
That $\mathbb{R}$ condenses onto a compact space follows from Parhomenko's theorem from~\cite{Parhomenko41}. In fact, it can be shown quite easily by drawing a picture that $\mathbb{R}$ condenses onto the figure eight.

Assume that $f: \mathbb{R}\to X$ is a condensation, where $X$ is any compact homogeneous space. Then $X = \bigcup_{n < \omega} f([-n,n])$, hence by the Baire Category Theorem, there exists $n < \omega$ such that $f([-n,n])$ has nonempty interior in $X$. But $f \upharpoonright [-n, n]$ is a homeomorphism, hence
this implies that there is an interval $(r,s)$ in $\mathbb{R}$ such that $f[(r,s)]$ is homeomorphic to $(r,s)$ and open in $X$.
By compactness and homogeneity of $X$, this implies that $X$ has a finite open cover consisting of sets homeomorphic to the interval $(r,s)$. This easily implies that $X$ is second countable and hence metrizable.

Thus $X$ is a metrizable 1-manifold without boundary and hence is homeomorphic to the circle (= 1-sphere) $\mathbb{S}^1$ (this is well-known, see e.g., Gale~\cite{Gale87}). Hence we may assume that $X=\mathbb{S}^1$. That this is impossible since $f$ is 1-1 is well-known and can be shown in many ways, we present an elementary one for the sake of completeness below.

Let $(u,v)$ be a nontrivial interval in $\mathbb{R}$. Then $f((u,v))$ is a nontrivial connected proper subset of $\mathbb{S}^1$, hence is either an open interval $(s,t)$, or a half-open interval $[s,t)$ (or $(s,t]$) or a closed interval $[s,t]$. We claim that the last two possibilities are impossible, so only the first one remains from which it follows that $f$ is open and hence a homeomorphism, which obviously is a contradiction. So assume that $f((u,v))$ is the interval $[s,t)$. Take $p\in (u,v)$ such that $f(p)=s$. Then $f((u,p))$ is a nontrivial connected subset of $[s,t)$ that has $p$ in its closure. Similarly for $f((p,v))$. But then $f((u,p)) \cap f((p,v))\not=\emptyset$, which contradicts $f$ being 1-1.
\end{proof}

Hence Proposition~\ref{vierdestelling} gives a counterexample of cardinality $\mathfrak{c}$. Counterexamples of cardinality $2^\tau$ for an arbitrary $\tau \ge \omega$, are also easily found. Consider such a cardinal $\tau$, and let $G$ be the topological group $\mathbb{Z}\times (\mathbb{S}^1)^\tau$. We will show that it does not condense onto a compact homogeneous space $X$. Suppose it does. Then by Theorem~\ref{tweedestelling}(3), $X$ has infinitely many clopen components homeomorphic to $(\mathbb{S}^1)^\tau$. But this contradicts homogeneity and compactness, since if there is one clopen component then all components are clopen by homogeneity and so there are only finitely many of them by compactness.


\def\cprime{$'$}
\makeatletter \renewcommand{\@biblabel}[1]{\hfill[#1]}\makeatother

\end{document}